\documentclass{article}
%%%%%%%%%%%%%%%%%%%%%%%%%%%%%%%%%%%%%%%%%%%%%%%%%%%%%%%%%%%%%%%%%%%%%%%%%%%%%%%%%%%%%%%%%%%%%%%%%%%%%%%%%%%%%%%%%%%%%%%%%%%%%%%%%%%%%%%%%%%%%%%%%%%%%%%%%%%%%%%%%%%%%%%%%%%%%%%%%%%%%%%%%%%%%%%%%%%%%%%%%%%%%%%%%%%%%%%%%%%%%%%%%%%%%%%%%%%%%%%%%%%%%%%%%%%%
\usepackage{amsfonts}
\usepackage{amsmath}

\setcounter{MaxMatrixCols}{10}
%TCIDATA{OutputFilter=LATEX.DLL}
%TCIDATA{Version=5.00.0.2570}
%TCIDATA{<META NAME="SaveForMode" CONTENT="1">}
%TCIDATA{Created=Tuesday, May 05, 2009 10:35:47}
%TCIDATA{LastRevised=Friday, February 22, 2013 09:25:17}
%TCIDATA{<META NAME="GraphicsSave" CONTENT="32">}
%TCIDATA{<META NAME="DocumentShell" CONTENT="Standard LaTeX\Blank - Standard LaTeX Article">}
%TCIDATA{Language=American English}
%TCIDATA{CSTFile=40 LaTeX article.cst}

\newtheorem{theorem}{Theorem}
\newtheorem{acknowledgement}[theorem]{Acknowledgement}
\newtheorem{algorithm}[theorem]{Lemma and Definition}

\newtheorem{definition}[theorem]{Definition}
\newtheorem{example}[theorem]{Example}

\newtheorem{lemma}[theorem]{Lemma}
\newtheorem{notation}[theorem]{Notation}

\newtheorem{remark}[theorem]{Remark}

\newenvironment{proof}[1][Proof]{\noindent\textbf{#1.} }{\ \rule{0.5em}{0.5em}}
\input{tcilatex}

\begin{document}

\title{Injectivity and flatness of semitopological modules}
\author{Henri Bourl\`{e}s\thanks{%
SATIE, ENS Cachan/CNAM, 61 Avenue President Wilson, F-94230 Cachan, France
(Henri.Bourles@satie.ens-cachan.fr).}}
\maketitle

\begin{abstract}
The spaces $\mathcal{D}$, $\mathcal{S}$ and $\mathcal{E}^{\prime }$ over $%
%TCIMACRO{\U{211d} }%
%BeginExpansion
\mathbb{R}
%EndExpansion
^{n}$ are known to be flat modules over $\mathbf{A}=%
%TCIMACRO{\U{2102} }%
%BeginExpansion
\mathbb{C}
%EndExpansion
\left[ \partial _{1},...,\partial _{n}\right] $, whereas their duals $%
\mathcal{D}^{\prime }$, $\mathcal{S}^{\prime }$ and $\mathcal{E}$ are known
to be injective modules over the same ring. \ \ Let $\mathbf{A}$ be a
Noetherian $\mathbf{k}$-algebra ($\mathbf{k}=%
%TCIMACRO{\U{211d} }%
%BeginExpansion
\mathbb{R}
%EndExpansion
$ or $%
%TCIMACRO{\U{2102} }%
%BeginExpansion
\mathbb{C}
%EndExpansion
$). \ The above observation leads us to study in this paper the link
existing between the flatness of an $\mathbf{A}$-module $E$ which is a
locally convex topological $\mathbf{k}$-vector space and the injectivity of
its dual. \ We show that, for dual pairs $\left( E,E^{\prime }\right) $
which are $\left( \mathcal{K}\right) $ over $\mathbf{A}$--a notion which is
explained in the paper--, injectivity of $E^{\prime }$ is a stronger
condition than flatness of $E$. \ A preprint of this paper (dated September
2009) has been quoted and discussed by Shankar \cite{Shankar}.
\end{abstract}

\sloppy

\section{Introduction}

Consider the spaces $\mathcal{D}$, $\mathcal{S}$ and $\mathcal{E}^{\prime }$
over $%
%TCIMACRO{\U{211d} }%
%BeginExpansion
\mathbb{R}
%EndExpansion
^{n}$, as well as their duals $\mathcal{D}^{\prime }$, $\mathcal{S}^{\prime
} $ and $\mathcal{E}$. \ Ehrenpreis \cite{Ehrenpreis-book}, Malgrange \cite%
{Malgrange-fund-princ}, \cite{Malgrange-Bourbaki} and Palamodov \cite%
{Palamodov} proved that $\mathcal{D}$, $\mathcal{S}$ and $\mathcal{E}%
^{\prime }$ are flat modules over $\mathbf{A}=%
%TCIMACRO{\U{2102} }%
%BeginExpansion
\mathbb{C}
%EndExpansion
\left[ \partial _{1},...,\partial _{n}\right] $ whereas $\mathcal{D}^{\prime
}$, $\mathcal{S}^{\prime }$ and $\mathcal{E}$ are injective over $\mathbf{A}$%
. \ If $F$ is any of these modules, all maps $F\rightarrow F:x\mapsto a\,x$ $%
\left( a\in \mathbf{A}\right) $\ are continuous; using Pirkovskii' s
terminology (\cite{Pirkovskii}, p. 5), this means that $F$ is \emph{%
semitopological}. \ This observation leads to wonder whether there exists a
link between the injectivity of a semitopological $\mathbf{A}$-module and
the flatness of its dual. \ The existence of such a link is studied in this
paper.

\section{Preliminaries}

\begin{notation}
In what follows, $\mathbf{A}$ is a Noetherian domain (not necessarily
commutative) which is a $\mathbf{k}$-algebra ($\mathbf{k}=%
%TCIMACRO{\U{211d} }%
%BeginExpansion
\mathbb{R}
%EndExpansion
$ or $%
%TCIMACRO{\U{2102} }%
%BeginExpansion
\mathbb{C}
%EndExpansion
$).
\end{notation}

Let $E$, $E^{\prime \text{ }}$be two $\mathbf{k}$-vector spaces. \ Assume
that $E^{\prime }$ is a left $\mathbf{A}$-module and that there exists a
nondegenerate bilinear form $\left\langle -,-\right\rangle :E\times
E^{\prime }\rightarrow \mathbf{k}$. \ Then $E$ and $E^{\prime }$ are locally
convex topological vector spaces endowed with the weak topologies $\sigma
\left( E,E^{\prime }\right) $ and $\sigma \left( E^{\prime },E\right) $
defined by $\left\langle -,-\right\rangle $; the pair $\left( E,E^{\prime
}\right) $ is called \emph{dual }(with respect to the bilinear form $%
\left\langle -,-\right\rangle $).

Assume that the left $\mathbf{A}$-module $E^{\prime }$ (written $_{\mathbf{A}%
}E^{\prime }$) is semitopological for the topology $\sigma \left( E^{\prime
},E\right) $. \ Then the $\mathbf{k}$-vector space $E$ becomes a right $%
\mathbf{A}$-module (written $E_{\mathbf{A}}$), setting 
\begin{equation}
\left\langle x\,a,x^{\prime }\right\rangle =\left\langle x,a\,x^{\prime
}\right\rangle  \label{duality-bracket}
\end{equation}%
for any $x\in E$, $x^{\prime }\in E^{\prime }$ and $a\in \mathbf{A}$, and it
is obviously semitopological, i.e., all maps $E\rightarrow E:x\mapsto x\,a$ $%
\left( a\in \mathbf{A}\right) $\ are continuous. \ Conversely, one can
likewise prove that if the right $\mathbf{A}$-module $E_{\mathbf{A}}$ is
semitopological for the topology $\sigma \left( E,E^{\prime }\right) $, then 
$_{\mathbf{A}}E^{\prime }$ is semitopological for the topology $\sigma
\left( E^{\prime },E\right) $. \ By $\left( \ref{duality-bracket}\right) $,
the transpose of the left multiplication by $a\in \mathbf{A}$, denoted by $%
a\bullet :E^{\prime }\rightarrow E^{\prime }$, is the right multiplication
by $a$, denoted by $\bullet a:E\rightarrow E$.

\begin{notation}
In what follows, $\left( E,E^{\prime }\right) $ is a dual pair and $E_{%
\mathbf{A}}$ (or equivalently $_{\mathbf{A}}E^{\prime }$) is a
semitopological module.
\end{notation}

The duality bracket $\left\langle -,-\right\rangle $ is extended to an
obvious way to $E^{1\times k}\times \left( E^{\prime }\right) ^{k}$; then $%
\left( E^{1\times k},\left( E^{\prime }\right) ^{k}\right) $ is again a dual
pair. \ Let $P\in \mathbf{A}^{q\times k}$; this matrix determines a
continuous linear map $P\bullet :\left( E^{\prime }\right) ^{k}\rightarrow
\left( E^{\prime }\right) ^{q}:x^{\prime }\mapsto P\,x^{\prime }$, the
transpose of which is $\bullet P:E^{1\times q}\times E^{1\times k}:x\mapsto
x\,P$.

\begin{example}
Let $E^{\prime }$ be the space of distributions $\mathcal{D}^{\prime }$, $%
\mathcal{S}^{\prime }$ or $\mathcal{E}^{\prime }$ over $%
%TCIMACRO{\U{211d} }%
%BeginExpansion
\mathbb{R}
%EndExpansion
^{n}$ and $E$ the associated space of test functions. \ From the above, the
transpose of $\partial _{i}\bullet :E^{\prime }\rightarrow E^{\prime }$ is $%
\bullet \partial _{i}:\mathcal{E}\rightarrow \mathcal{E}$, and for any $T\in
E^{\prime }$, $\varphi \in E$, $\left\langle \varphi \,\partial
_{i},T\right\rangle =\left\langle \varphi ,\partial _{i}\,T\right\rangle $.
\ Since $\left\langle \varphi ,\partial _{i}\,T\right\rangle =-\left\langle
\partial _{i}\,\varphi ,T\right\rangle $, one has $\varphi \,\partial
_{i}=-\partial _{i}\,\varphi $ $\left( \varphi \in E\right) $, i.e., $%
\bullet \partial _{i}=-\partial _{i}\bullet $.
\end{example}

Consider the following sequences where $P_{1}\in \mathbf{A}^{k_{1}\times
k_{2}},P_{2}\in \mathbf{A}^{k_{2}\times k_{3}}$:%
\begin{equation}
\mathbf{A}^{1\times k_{1}}\overset{\bullet P_{1}}{\longrightarrow }\mathbf{A}%
^{1\times k_{2}}\overset{\bullet P_{2}}{\longrightarrow }\mathbf{A}^{1\times
k_{3}},  \label{exact-seq-1}
\end{equation}%
\begin{equation}
E^{1\times k_{1}}\overset{\bullet P_{1}}{\longrightarrow }E^{1\times k_{2}}%
\overset{\bullet P_{2}}{\longrightarrow }E^{1\times k_{3}},
\label{exact-sequence-2}
\end{equation}%
\begin{equation}
\left( E^{\prime }\right) ^{k_{3}}\overset{P_{2}\bullet }{\longrightarrow }%
\left( E^{\prime }\right) ^{k_{2}}\overset{P_{1}\bullet }{\longrightarrow }%
\left( E^{\prime }\right) ^{k_{1}}.  \label{exact-sequence-3}
\end{equation}

The facts recalled below are classical:

\begin{lemma}
\label{Lemma-recalled}(i) The module $E_{\mathbf{A}}$ is flat if, and only
if whenever $\left( \ref{exact-seq-1}\right) $ is exact, $\left( \ref%
{exact-sequence-2}\right) $, deduced from $\left( \ref{exact-seq-1}\right) $
using the functor $E\tbigotimes\nolimits_{\mathbf{A}}-$, is again exact (%
\cite{Palamodov}, Part I, \S I.3, Prop. 5).\newline
(ii) The module $_{\mathbf{A}}E^{\prime }$ is injective if, and only if
whenever $\left( \ref{exact-seq-1}\right) $ is exact, $\left( \ref%
{exact-sequence-3}\right) $, deduced from $\left( \ref{exact-seq-1}\right) $
using the functor $\func{Hom}_{\mathbf{A}}\left( -,E^{\prime }\right) $, is
again exact (\cite{Palamodov}, Part I, \S I.3, Prop. 9).\newline
(iii) For any matrix $P_{2}\in \mathbf{A}^{k_{2}\times k_{3}}$, there exist
a natural integer $k_{1}$ and a matrix $P_{1}\in \mathbf{A}^{k_{1}\times
k_{2}}$ such that $\left( \ref{exact-seq-1}\right) $ is exact. \ Conversely,
given a matrix $P_{1}\in \mathbf{A}^{k_{1}\times k_{2}}$, there exists a
matrix $P_{2}\in \mathbf{A}^{k_{2}\times k_{3}}$ such that $\left( \ref%
{exact-seq-1}\right) $ is exact\ if, and only if $\func{coker}_{\mathbf{A}%
}\left( \bullet P_{1}\right) =\mathbf{A}^{1\times k_{2}}/\left( \mathbf{A}%
^{1\times k_{1}}\,P_{2}\right) $ is torsion-free (see, e.g., \cite%
{Bourles-Oberst-1}, Lemma 2.15).\newline
(iv) The following equalities hold (\cite{Bourbaki-EVT}, \S IV.6, Corol. 2
of Prop. 6):%
\begin{eqnarray*}
\ker _{E^{\prime }}\left( P_{1}\bullet \right) &=&\left( \func{im}_{E}\left(
\bullet P_{1}\right) \right) ^{0}, \\
\overline{\func{im}_{E^{\prime }}\left( P_{2}\bullet \right) } &=&\left(
\ker _{E}\left( \bullet P_{2}\right) \right) ^{0}
\end{eqnarray*}%
where $\left( .\right) ^{0}$ is the polar of $\left( .\right) $.
\end{lemma}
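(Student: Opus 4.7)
All four items are standard, and the plan is to treat them in parallel, reducing each to a classical fact once the appropriate identifications are in place. For part (i), the strategy is to use the canonical isomorphism $E \otimes_{\mathbf{A}} \mathbf{A}^{1\times k} \cong E^{1\times k}$ as $\mathbf{k}$-vector spaces; under this isomorphism, the induced arrow $1_E \otimes (\bullet P_i)$ is precisely $\bullet P_i$, so that (\ref{exact-sequence-2}) is exactly the image of (\ref{exact-seq-1}) under $E \otimes_{\mathbf{A}} -$. The claim then follows from the defining property of flatness in the form recalled in Palamodov. Part (ii) is handled analogously via $\mathrm{Hom}_{\mathbf{A}}(\mathbf{A}^{1\times k}, E') \cong (E')^{k}$, under which $P_i \bullet$ corresponds to the map induced by $\bullet P_i$, and then the defining property of injectivity applies.

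For part (iii), the existence of $P_1$ follows from right Noetherianness: $\ker_{\mathbf{A}}(\bullet P_2)$ is a submodule of the finitely generated right module $\mathbf{A}^{1\times k_2}$ and is therefore itself finitely generated; arranging a finite generating set as the rows of $P_1$ produces an exact sequence (\ref{exact-seq-1}). For the converse characterization, one direction is immediate: if such a $P_2$ exists, then $\mathrm{coker}(\bullet P_1) \cong \mathbf{A}^{1\times k_2}/\ker(\bullet P_2)$ embeds into the free module $\mathbf{A}^{1\times k_3}$, hence is torsion-free (as $\mathbf{A}$ is a domain). The opposite direction — that torsion-freeness of the cokernel suffices to construct $P_2$ — is the only nontrivial piece of the lemma, and the plan is to invoke it directly from \cite{Bourles-Oberst-1}, Lemma 2.15, which provides the required embedding of a finitely presented torsion-free right $\mathbf{A}$-module into a free module.

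For part (iv), the plan is to unwind the definition of the polar using the duality formula $\langle x\,a, x'\rangle = \langle x, a\,x'\rangle$. An element $x' \in (E')^{k_2}$ lies in $(\mathrm{im}_E(\bullet P_1))^{0}$ iff $\langle x, P_1\,x'\rangle = 0$ for every $x \in E^{1\times k_1}$, and non-degeneracy of the pairing then forces $P_1\,x' = 0$, giving the first identity. For the second, the same argument applied to $P_2\bullet$ yields $(\mathrm{im}_{E'}(P_2\bullet))^{0} = \ker_{E}(\bullet P_2)$; taking polars once more and invoking the bipolar theorem for the weak topology — which identifies the bipolar of a linear subspace with its weak closure — gives the stated equality, as recalled in Bourbaki. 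The only real obstacle across the entire lemma is the converse direction in (iii); everything else is a formal translation of well-known categorical or duality statements into the matrix notation used here.
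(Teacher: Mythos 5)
Your proposal is correct and follows essentially the same route as the paper, which presents this lemma purely as a collection of classical facts backed by the very citations you invoke (Palamodov for (i)--(ii), Bourl\`es--Oberst Lemma 2.15 for the nontrivial direction of (iii), Bourbaki for (iv)); your added reductions --- the identifications $E\otimes_{\mathbf{A}}\mathbf{A}^{1\times k}\cong E^{1\times k}$ and $\mathrm{Hom}_{\mathbf{A}}\left( \mathbf{A}^{1\times k},E^{\prime }\right) \cong \left( E^{\prime }\right) ^{k}$, the Noetherian argument producing $P_{1}$, the embedding-into-free argument for torsion-freeness, and the polar/bipolar computation --- are all accurate. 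One cosmetic correction: since $\mathbf{A}$ need not be commutative, the row modules $\mathbf{A}^{1\times k}$ with the maps $\bullet P_{i}$ are \emph{left} $\mathbf{A}$-modules, so $\ker _{\mathbf{A}}\left( \bullet P_{2}\right) $ is finitely generated because $\mathbf{A}$ is left Noetherian, not ``right Noetherian'' acting on a ``right module'' as you wrote (harmless here, as the paper's $\mathbf{A}$ is Noetherian on both sides).
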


Consider the sequence involving $2+n$ maps $\bullet P_{i}$ $\left( 1\leq
i\leq 2+n\right) $%
\begin{equation}
\mathbf{A}^{1\times k_{1}}\overset{\bullet P_{1}}{\longrightarrow }\mathbf{A}%
^{1\times k_{2}}\overset{\bullet P_{2}}{\longrightarrow }\mathbf{A}^{1\times
k_{3}}\overset{}{\longrightarrow }...\overset{\bullet P_{2+n}}{%
\longrightarrow }\mathbf{A}^{1\times k_{3+n}}  \label{extended-exact-seq}
\end{equation}%
where $n\geq 0$.

\begin{definition}
\label{def-injective-torsionfree}The module $_{\mathbf{A}}E^{\prime }$ is
called $n$-injective if whenever $\left( \ref{extended-exact-seq}\right) $
is exact, $\left( \ref{exact-sequence-3}\right) $ is again exact.
\end{definition}

The following is obvious:

\begin{lemma}
(i) If the module $_{\mathbf{A}}E^{\prime }$ is $n$-injective $\left( n\geq
0\right) $, then it is $n^{\prime }$-injective for all integers $n^{\prime }$
such that $n^{\prime }\geq n$.\newline
(ii) The module $_{\mathbf{A}}E^{\prime }$ is $0$-injective if, and only if
it is injective.
\end{lemma}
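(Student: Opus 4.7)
The plan is that both parts of this lemma are essentially bookkeeping, as suggested by the author's remark that they are obvious, so the sketch should just make the two observations precise.

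For part (i), I would argue by truncation. Suppose $_{\mathbf{A}}E^{\prime }$ is $n$-injective and that $n^{\prime }\geq n$. Given an exact sequence $\mathbf{A}^{1\times k_{1}}\overset{\bullet P_{1}}{\longrightarrow }\cdots \overset{\bullet P_{2+n^{\prime }}}{\longrightarrow }\mathbf{A}^{1\times k_{3+n^{\prime }}}$ of the form $(\ref{extended-exact-seq})$ with $n$ replaced by $n^{\prime }$, its interior exactness at positions $2,3,\dots ,2+n$ is part of the hypothesis. Hence the truncated sequence $\mathbf{A}^{1\times k_{1}}\overset{\bullet P_{1}}{\longrightarrow }\mathbf{A}^{1\times k_{2}}\overset{\bullet P_{2}}{\longrightarrow }\cdots \overset{\bullet P_{2+n}}{\longrightarrow }\mathbf{A}^{1\times k_{3+n}}$ is exact in the sense required by Definition \ref{def-injective-torsionfree}. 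Applying $n$-injectivity to this truncation yields the desired exactness of $(\ref{exact-sequence-3})$.

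For part (ii), I would just match definitions. Setting $n=0$ in $(\ref{extended-exact-seq})$ produces literally the sequence $(\ref{exact-seq-1})$, with the same matrices $P_{1},P_{2}$ and no further data. Hence Definition \ref{def-injective-torsionfree} with $n=0$ asserts precisely that whenever $(\ref{exact-seq-1})$ is exact, $(\ref{exact-sequence-3})$ is exact, which is the characterization of injectivity of $_{\mathbf{A}}E^{\prime }$ recorded in Lemma \ref{Lemma-recalled}(ii). The two conditions are therefore equivalent.

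There is no genuine obstacle in either part; the only small point worth flagging is that in (i) one must observe that ``exactness'' of $(\ref{extended-exact-seq})$ refers to exactness at each interior node, so truncating on the right preserves all the exactness conditions that appear in the shorter sequence. Once that is noted, both implications are formal.
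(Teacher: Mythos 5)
Your proposal is correct: the paper offers no proof (it declares the lemma obvious), and your truncation argument for (i) together with the definition-matching for (ii) is exactly the routine verification the author has in mind. In particular you rightly flag the only point of substance, namely that exactness of $(\ref{extended-exact-seq})$ means exactness at the interior nodes, so dropping the rightmost maps preserves every exactness condition needed to invoke $n$-injectivity with the same $P_{1},P_{2}$.
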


\begin{lemma}
\label{lemma-cl(images)-kernel}(1) If $\left( \ref{exact-sequence-2}\right) $
is exact, then $\overline{\func{im}_{E^{\prime }}\left( P_{2}\bullet \right) 
}=\ker _{E^{\prime }}\left( P_{1}\bullet \right) $.\newline
(2) If $\left( \ref{exact-sequence-3}\right) $ is exact, then $\overline{%
\func{im}_{E}\left( \bullet P_{1}\right) }=\ker _{E}\left( \bullet
P_{2}\right) $.
\end{lemma}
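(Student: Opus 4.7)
The plan is to derive both statements directly from Lemma~\ref{Lemma-recalled}(iv) combined with the bipolar theorem applied to the dual pair $(E^{1\times k_{2}},(E^{\prime })^{k_{2}})$.

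For part (1), I would start from the hypothesis $\func{im}_{E}(\bullet P_{1})=\ker _{E}(\bullet P_{2})$ (exactness of (\ref{exact-sequence-2})) and simply take polars in $(E^{\prime })^{k_{2}}$. The polar of the left-hand side is $\ker _{E^{\prime }}(P_{1}\bullet )$ by the first equality in (iv), while the polar of the right-hand side is $\overline{\func{im}_{E^{\prime }}(P_{2}\bullet )}$ by the second equality. This yields the claim in one line.

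For part (2), I would argue dually. Starting from $\func{im}_{E^{\prime }}(P_{2}\bullet )=\ker _{E^{\prime }}(P_{1}\bullet )$, pass to closures (the right-hand side is already closed since $P_{1}\bullet $ is $\sigma (E^{\prime },E)$-continuous), then take polars in $E^{1\times k_{2}}$. By the second equality of (iv), the polar of $\overline{\func{im}_{E^{\prime }}(P_{2}\bullet )}$ equals $(\ker _{E}(\bullet P_{2}))^{00}$, which collapses to $\ker _{E}(\bullet P_{2})$ by the bipolar theorem, using that this kernel is already $\sigma (E,E^{\prime })$-closed. By the first equality of (iv), the polar of $\ker _{E^{\prime }}(P_{1}\bullet )$ equals $(\func{im}_{E}(\bullet P_{1}))^{00}=\overline{\func{im}_{E}(\bullet P_{1})}$, again by the bipolar theorem. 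Equating the two sides gives the conclusion.

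I do not anticipate a serious obstacle: the argument is a mechanical application of Lemma~\ref{Lemma-recalled}(iv) and the bipolar theorem for dual pairs. The only points requiring minor care are that the polar of a set coincides with the polar of its closure (so the absence of an explicit closure bar in the hypothesis of (2) is harmless) and that kernels of continuous linear maps are weakly closed (so no additional closure bar is needed on $\ker _{E^{\prime }}(P_{1}\bullet )$ in (1)).
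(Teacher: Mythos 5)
Your proposal is correct and follows essentially the same route as the paper: in both parts you take polars of the exactness identity, invoke Lemma \ref{Lemma-recalled}(iv), and in part (2) use the bipolar theorem together with the fact that $\ker _{E}\left( \bullet P_{2}\right) $ is weakly closed and that a set and its closure have the same polar. The minor reordering in part (2) (closing up first, then taking polars) does not change the argument in any substantive way.
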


\begin{proof}
(1) If $\left( \ref{exact-sequence-2}\right) $ is exact, then $\ker
_{E}\left( \bullet P_{2}\right) =\func{im}_{E}\left( \bullet P_{1}\right) $,
therefore $\left( \ker _{E}\left( \bullet P_{2}\right) \right) ^{0}=\left( 
\func{im}_{E}\left( \bullet P_{1}\right) \right) ^{0}$ with $\left( \ker
_{E}\left( \bullet P_{2}\right) \right) ^{0}=\overline{\func{im}_{E^{\prime
}}\left( P_{2}\bullet \right) }$ and $\left( \func{im}_{E}\left( \bullet
P_{1}\right) \right) ^{0}=\ker _{E^{\prime }}\left( P_{1}\bullet \right) $.

(2) If $\left( \ref{exact-sequence-3}\right) $ is exact, then $\ker
_{E^{\prime }}\left( P_{1}\bullet \right) =\func{im}_{E^{\prime }}\left(
P_{2}\bullet \right) $, therefore $\left( \func{im}_{E}\bullet P_{1}\right)
^{0}=\func{im}_{E^{\prime }}\left( P_{2}\bullet \right) $, thus $\left( 
\func{im}_{E}\left( \bullet P_{1}\right) \right) ^{00}=\left( \func{im}%
_{E^{\prime }}\left( P_{2}\bullet \right) \right) ^{0}=\left( \overline{%
\func{im}_{E^{\prime }}\left( P_{2}\bullet \right) }\right) ^{0}=\left( \ker
_{E}\left( \bullet P_{2}\right) \right) ^{00}$, and $\overline{\func{im}%
_{E}\left( \bullet P_{1}\right) }=\ker _{E}\left( \bullet P_{2}\right) $ by
the bipolar theorem since $\ker _{E}\left( \bullet P_{2}\right) $ is closed.
\end{proof}

\section{Injectivity vs. flatness}

\begin{algorithm}
(1) Let $P\in \mathbf{A}^{k\times r}$; Conditions (i)-(iv) below are
equivalent:\newline
(i) $P\bullet :\left( E^{\prime }\right) ^{r}\rightarrow \left( E^{\prime
}\right) ^{k}$ is a strict morphism and so is also $\bullet P:E^{1\times
k}\rightarrow E^{1\times r}$;\newline
(ii) $P\bullet :\left( E^{\prime }\right) ^{r}\rightarrow \left( E^{\prime
}\right) ^{k}$ is a strict morphism with closed image (in $\left( E^{\prime
}\right) ^{k}$);\newline
(iii) $\bullet P:E^{1\times k}\rightarrow E^{1\times r}$ is a strict
morphism with closed image (in $E^{1\times r}$);\newline
(iv) both maps $\bullet P:E^{1\times k}\rightarrow E^{1\times r}$ and $%
P\bullet :\left( E^{\prime }\right) ^{r}\rightarrow \left( E^{\prime
}\right) ^{k}$ have a closed image.\newline
(2) The dual pair $\left( E,E^{\prime }\right) $ is said to be \emph{K\"{o}%
the} (or $\left( \mathcal{K}\right) $, for short)\ over $\mathbf{A}$ if for
any positive integers $k,r$ and any matrix $P\in \mathbf{A}^{k\times r}$,
the following condition holds: $\bullet P:E^{1\times k}\rightarrow
E^{1\times r}$ has a closed image if, and only if $P\bullet :\left(
E^{\prime }\right) ^{r}\rightarrow \left( E^{\prime }\right) ^{k}$ has a
closed image.
\end{algorithm}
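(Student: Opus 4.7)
The plan is to reduce the four-way equivalence of (1) to a single pivotal fact about mutually transposed maps in a dual pair; part (2) is a definition and requires no proof. Set $u=\bullet P:E^{1\times k}\to E^{1\times r}$ and $v=P\bullet :(E^{\prime })^{r}\to (E^{\prime })^{k}$; with respect to the duality bracket extended coordinatewise to $E^{1\times m}\times (E^{\prime })^{m}$, $u$ and $v$ are weakly continuous and mutually transposed. The pivotal claim is: $u$ is a strict morphism for the weak topologies if and only if $\func{im}v$ is closed in $(E^{\prime })^{k}$, and symmetrically $v$ is strict iff $\func{im}u$ is closed.

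To establish this claim I would factor $u$ as $E^{1\times k}\to E^{1\times k}/\ker u\xrightarrow{\bar{u}}\func{im}u\hookrightarrow E^{1\times r}$ and identify the weak topological duals of the two middle spaces. The quotient $E^{1\times k}/\ker u$ inherits the weak topology $\sigma (E^{1\times k}/\ker u,(\ker u)^{0})$, so its dual is the polar $(\ker u)^{0}\subset (E^{\prime })^{k}$. On the other side, the continuous linear forms on $\func{im}u$ with the subspace topology are the restrictions of elements of $(E^{\prime })^{r}$ modulo those vanishing on $\func{im}u$; a direct computation using the transposition identity gives $(\func{im}u)^{\perp }=\ker v$, so the dual is $(E^{\prime })^{r}/\ker v$. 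The transpose of $\bar{u}$ is then the well-defined injection $[y]\mapsto v(y)$ from $(E^{\prime })^{r}/\ker v$ into $(\ker u)^{0}$, with image $\func{im}v$. Since $\overline{\func{im}v}=(\ker u)^{0}$ by Lemma~\ref{Lemma-recalled}(iv), that transpose is surjective, and hence $\bar{u}$ is a weak homeomorphism and $u$ is strict, precisely when $\func{im}v$ is closed.

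With the pivotal fact available, the four equivalences become bookkeeping. Condition (i) asserts both $u$ and $v$ strict, which by the pivotal fact means both $\func{im}u$ and $\func{im}v$ are closed, i.e., (iv). Condition (ii) combines ``$v$ strict'' (equivalent to $\func{im}u$ closed) with $\func{im}v$ closed, again (iv); (iii) is the symmetric statement. Conversely, (iv) restores strictness of both maps via the pivotal fact, yielding (i), (ii), and (iii). The main obstacle is the careful identification of the weak duals of the quotient $E^{1\times k}/\ker u$ and of the subspace $\func{im}u$ together with the explicit description of the transpose of $\bar{u}$; these are standard in dual-pair theory (Bourbaki, EVT, Ch.~IV) but must be checked in the concrete setting of the finite products $E^{1\times k}$ and $(E^{\prime })^{k}$ before the diagram chase above goes through.
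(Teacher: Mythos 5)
Your proposal is correct and in substance coincides with the paper's treatment: the paper disposes of part (1) simply by citing K\"{o}the, \textit{Topological Vector Spaces II}, \S 32.3, and your pivotal claim --- $u=\bullet P$ is a strict (weak) morphism iff $\operatorname{im}(P\bullet)$ is weakly closed, and symmetrically --- proved via the factorization through $E^{1\times k}/\ker u$ and the identification of the weak duals of the quotient and of $\operatorname{im}u$, is exactly the classical duality theorem invoked there, with $\overline{\operatorname{im}(P\bullet)}=(\ker(\bullet P))^{0}$ supplied by Lemma \ref{Lemma-recalled}(iv). The reduction of (i)--(iv) to ``both images closed'' and the remark that (2) is a definition needing no proof are likewise as intended.
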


\begin{proof}
(1): see, e.g., (\cite{Kothe-TVS-II}, \S 32.3).
\end{proof}

\begin{remark}
\label{example-QF}(1) The dual pair $\left( E,E^{\prime }\right) $ is not
necessarily $\left( \mathcal{K}\right) $ over $\mathbf{A}$ by (\cite%
{Bourbaki-EVT}, \S II.6, Remark 2 after Corol. 4 of Prop. 7); see, also, (%
\cite{Dierolf}, Prop. 2.3).\newline
(2) Assume that $E$ is a Fr\'{e}chet space (\textit{e.g.}, $E=\mathcal{S}$), 
$E^{\prime }$ is its dual and $\left\langle -,-\right\rangle $ is the
canonical duality bracket. \ Then for any integer $k$, $E^{1\times k}$ is
again a Fr\'{e}chet space, and the dual pair $\left( E,E^{\prime }\right) $
is $\left( \mathcal{K}\right) $ over $\mathbf{A}$ by (\cite{Bourbaki-EVT}, 
\S IV.4, Theorem 1).\newline
(3) Likewise, if $E$ is the dual of a \emph{reflexive} Fr\'{e}chet space,
then the dual pair $\left( E,E^{\prime }\right) $ is $\left( \mathcal{K}%
\right) $ over $\mathbf{A}$. \ Indeed, let $E=F^{\prime }$ where $F$ is a
reflexive Fr\'{e}chet space. \ If $\bullet P:\left( F^{\prime }\right)
^{1\times k}\rightarrow \left( F^{\prime }\right) ^{1\times r}$ has a closed
image, then by the above-quoted theorem $P\bullet :F^{r}\rightarrow F^{k}$
has a closed image and $F=F^{\prime \prime }=E^{\prime }.$ \ Conversely, if $%
P\bullet :\left( E^{\prime }\right) ^{r}\rightarrow \left( E^{\prime
}\right) ^{k}$ has a closed image, then $\bullet P:E^{1\times k}\rightarrow
E^{1\times r}$ has a closed image, for $E^{\prime }=F$ and $E=F^{\prime }.$%
\newline
(4) Whether the above holds when $E$ is an arbitrary $\left( \mathcal{LF}%
\right) $ space was mentioned in (\cite{Dieudonne-Schwartz}, \S 15.10) as
being an open question; to our knowledge, this question is still open today.
\end{remark}

\begin{lemma}
\label{lemma-E-prime-injective-strict}Let $P_{1}\in \mathbf{A}^{k_{1}\times
k_{2}}$.\newline
(i) Assume that $_{\mathbf{A}}E^{\prime }$ is injective. \ Then $\limfunc{im}%
_{E^{\prime }}\left( P_{1}\bullet \right) $ is closed (or equivalently, $%
\bullet P_{1}:E^{1\times k_{1}}\rightarrow E^{1\times k_{2}}$ is strict).%
\newline
(ii) Assume that $\func{coker}_{\mathbf{A}}\left( \bullet P_{1}\right) $ is
torsion-free and $E_{\mathbf{A}}$ is flat. \ Then $\limfunc{im}_{E}\left(
\bullet P_{1}\right) $ is closed (or equivalently, $P_{1}\bullet :\left(
E^{\prime }\right) ^{k_{2}}\rightarrow \left( E^{\prime }\right) ^{k_{1}}$
is strict).
\end{lemma}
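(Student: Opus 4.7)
The plan is to reduce both parts to the module-theoretic characterizations in Lemma~\ref{Lemma-recalled}, using part~(iii) of that lemma to supply the missing arrow adjacent to $\bullet P_{1}$ on the appropriate side.

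For~(i), starting from $P_{1}\in \mathbf{A}^{k_{1}\times k_{2}}$, I would invoke the first (unconditional) direction of Lemma~\ref{Lemma-recalled}(iii)---available because $\mathbf{A}$ is Noetherian---to extend on the \emph{left}: there exist $k_{0}$ and $P_{0}\in \mathbf{A}^{k_{0}\times k_{1}}$ making
$$\mathbf{A}^{1\times k_{0}}\overset{\bullet P_{0}}{\longrightarrow }\mathbf{A}^{1\times k_{1}}\overset{\bullet P_{1}}{\longrightarrow }\mathbf{A}^{1\times k_{2}}$$
exact. Injectivity of $_{\mathbf{A}}E^{\prime }$ via Lemma~\ref{Lemma-recalled}(ii) then yields exactness of
$$\left( E^{\prime }\right) ^{k_{2}}\overset{P_{1}\bullet }{\longrightarrow }\left( E^{\prime }\right) ^{k_{1}}\overset{P_{0}\bullet }{\longrightarrow }\left( E^{\prime }\right) ^{k_{0}},$$
so $\func{im}_{E^{\prime }}(P_{1}\bullet )=\ker _{E^{\prime }}(P_{0}\bullet )$, which is closed as the kernel of a continuous linear map. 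The equivalence with strictness of $\bullet P_{1}$ is then delivered by part~(1) of the preceding Lemma and Definition.

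For~(ii), the approach is symmetric. The torsion-freeness of $\func{coker}_{\mathbf{A}}(\bullet P_{1})$ is exactly the hypothesis needed for the converse direction of Lemma~\ref{Lemma-recalled}(iii), which produces a matrix $P_{2}\in \mathbf{A}^{k_{2}\times k_{3}}$ extending the sequence on the \emph{right}:
$$\mathbf{A}^{1\times k_{1}}\overset{\bullet P_{1}}{\longrightarrow }\mathbf{A}^{1\times k_{2}}\overset{\bullet P_{2}}{\longrightarrow }\mathbf{A}^{1\times k_{3}}$$
exact. Flatness of $E_{\mathbf{A}}$ together with Lemma~\ref{Lemma-recalled}(i) then promotes this to exactness of
$$E^{1\times k_{1}}\overset{\bullet P_{1}}{\longrightarrow }E^{1\times k_{2}}\overset{\bullet P_{2}}{\longrightarrow }E^{1\times k_{3}},$$
whence $\func{im}_{E}(\bullet P_{1})=\ker _{E}(\bullet P_{2})$ is closed, and strictness of $P_{1}\bullet $ again follows from the Lemma and Definition.

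I do not expect a serious obstacle. Each part is fundamentally a single application of a matrix-completion statement combined with the abstract characterization of the relevant module-theoretic property. The only care required is to pick the correct direction in Lemma~\ref{Lemma-recalled}(iii)---left extension in~(i), requiring only Noetherianness, and right extension in~(ii), requiring the torsion-free cokernel hypothesis---and to rely on the equivalences recorded (with reference to K\"{o}the) in the preceding Lemma and Definition to pass from ``closed image on one side of the duality'' to ``strict morphism on the other.''
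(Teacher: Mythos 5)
Your proposal is correct and follows essentially the same route as the paper: in (i) a left extension $P_{0}$ from the unconditional part of Lemma \ref{Lemma-recalled}(iii) plus injectivity gives $\func{im}_{E^{\prime }}\left( P_{1}\bullet \right) =\ker _{E^{\prime }}\left( P_{0}\bullet \right) $, and in (ii) the torsion-free hypothesis supplies the right extension $P_{2}$ so that flatness gives $\func{im}_{E}\left( \bullet P_{1}\right) =\ker _{E}\left( \bullet P_{2}\right) $, with strictness obtained from the K\"{o}the criterion in each case. This is exactly the paper's argument.
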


\begin{proof}
(i): By Lemma \ref{Lemma-recalled}(iii), there exists a matrix $P_{0}\in 
\mathbf{A}^{k_{0}\times k_{1}}$ such that the sequence%
\begin{equation*}
\mathbf{A}^{1\times k_{0}}\overset{\bullet P_{0}}{\longrightarrow }\mathbf{A}%
^{1\times k_{1}}\overset{\bullet P_{1}}{\longrightarrow }\mathbf{A}^{1\times
k_{2}}
\end{equation*}%
is exact, and since $_{\mathbf{A}}E^{\prime }$ is injective, the sequence%
\begin{equation*}
\left( E^{\prime }\right) ^{k_{2}}\overset{P_{1}\bullet }{\longrightarrow }%
\left( E^{\prime }\right) ^{k_{1}}\overset{P_{0}\bullet }{\longrightarrow }%
\left( E^{\prime }\right) ^{k_{0}}
\end{equation*}%
is exact. \ Therefore, $\func{im}_{E^{\prime }}\left( P_{1}\bullet \right)
=\ker _{E^{\prime }}\left( P_{0}\bullet \right) $, thus $\func{im}%
_{E^{\prime }}\left( P_{1}\bullet \right) $ is closed, and $\bullet
P_{1}:E^{1\times k_{1}}\rightarrow E^{1\times k_{2}}$ is strict by (\cite%
{Kothe-TVS-II}, \S 32.3).

(ii): Since $\func{coker}_{\mathbf{A}}\left( \bullet P_{1}\right) $ is
torsion-free, by Lemma \ref{Lemma-recalled}(iii) there exists $P_{2}\in 
\mathbf{A}^{k_{2}\times k_{3}}$ such that the sequence $\left( \ref%
{exact-seq-1}\right) $ is exact. \ Since $E_{\mathbf{A}}$ is flat, the
sequence $\left( \ref{exact-sequence-2}\right) $ is exact. \ Therefore, $%
\func{im}_{E}\left( \bullet P_{1}\right) =\ker _{E}\left( \bullet
P_{2}\right) $ is closed, and $P_{1}\bullet :\left( E^{\prime }\right)
^{k_{2}}\rightarrow \left( E^{\prime }\right) ^{k_{1}}$ is strict by (\cite%
{Kothe-TVS-II}, \S 32.3).
\end{proof}

\begin{theorem}
\label{th-paper}Assume that the dual pair $\left( E,E^{\prime }\right) $ is $%
\left( \mathcal{K}\right) $ over $\mathbf{A}$.\newline
(1) If $_{\mathbf{A}}E^{\prime }$ is injective, then $E_{\mathbf{A}}$ is
flat.\newline
(2) Conversely, if $E_{\mathbf{A}}$ is flat, then $_{\mathbf{A}}E^{\prime }$
is $1$-injective.
\end{theorem}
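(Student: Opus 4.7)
The plan is to treat both parts in parallel, following the same three-step schema: translate the desired conclusion, via Lemma~\ref{Lemma-recalled}(i) or (ii), into the exactness of (\ref{exact-sequence-2}) or (\ref{exact-sequence-3}); use Lemma~\ref{lemma-cl(images)-kernel} to reduce that exactness to a closedness assertion about the image of a relevant map; and finally obtain closedness on one side of the duality from the standing hypothesis, then transfer it to the other side via the $(\mathcal{K})$ property.

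For part~(1), I would assume $_\mathbf{A}E'$ is injective and that a sequence of the form (\ref{exact-seq-1}) is exact. Lemma~\ref{Lemma-recalled}(ii) gives exactness of (\ref{exact-sequence-3}), and Lemma~\ref{lemma-cl(images)-kernel}(2) yields $\overline{\operatorname{im}_E(\bullet P_1)} = \ker_E(\bullet P_2)$, so it is enough to prove that $\operatorname{im}_E(\bullet P_1)$ is closed. To this end, I would extend (\ref{exact-seq-1}) one step to the left using Lemma~\ref{Lemma-recalled}(iii), producing $P_0 \in \mathbf{A}^{k_0 \times k_1}$ such that the three-term sequence ending at $\bullet P_1$ is exact; injectivity of $_\mathbf{A}E'$ then identifies $\operatorname{im}_{E'}(P_1 \bullet)$ with $\ker_{E'}(P_0 \bullet)$, which is closed, and the $(\mathcal{K})$ hypothesis transfers this closedness across the duality to $\operatorname{im}_E(\bullet P_1)$.

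For part~(2), I would assume $E_\mathbf{A}$ is flat and the sequence (\ref{extended-exact-seq}) with $n = 1$ is exact, so that one extra matrix $P_3$ appears to the right of $P_2$. Flatness yields exactness of the corresponding $E$-sequence, and Lemma~\ref{lemma-cl(images)-kernel}(1) then gives $\overline{\operatorname{im}_{E'}(P_2 \bullet)} = \ker_{E'}(P_1 \bullet)$, so the desired exactness of (\ref{exact-sequence-3}) reduces to the closedness of $\operatorname{im}_{E'}(P_2 \bullet)$. Here the role of the extra matrix $P_3$ is decisive: exactness at $\mathbf{A}^{1 \times k_3}$ identifies $\operatorname{coker}_\mathbf{A}(\bullet P_2)$ with $\operatorname{im}_\mathbf{A}(\bullet P_3) \subset \mathbf{A}^{1 \times k_4}$, a submodule of a free module over a domain, hence torsion-free. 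Lemma~\ref{lemma-E-prime-injective-strict}(ii) applied to $P_2$ then gives closedness of $\operatorname{im}_E(\bullet P_2)$, and the $(\mathcal{K})$ hypothesis once more transfers this to $\operatorname{im}_{E'}(P_2 \bullet)$.

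The argument is essentially bookkeeping once the roles of the preliminary lemmas are recognized; I do not expect any serious analytic obstacle. The only real subtlety, which also explains the asymmetry between the two statements, is the need for the extra matrix $P_3$ in (2): without it one cannot guarantee that $\operatorname{coker}_\mathbf{A}(\bullet P_2)$ is torsion-free, so Lemma~\ref{lemma-E-prime-injective-strict}(ii) is unavailable and there is no mechanism for producing closed images on the $E$-side. This is precisely why the converse delivers $1$-injectivity rather than full injectivity.
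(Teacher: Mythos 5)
Your proposal is correct and follows essentially the same route as the paper: translate the hypothesis and conclusion via Lemma \ref{Lemma-recalled}, reduce exactness of (\ref{exact-sequence-2}) resp.\ (\ref{exact-sequence-3}) to a closedness statement via Lemma \ref{lemma-cl(images)-kernel}, and transfer closedness across the duality with the $\left( \mathcal{K}\right) $ property. The only cosmetic differences are that in (1) you inline the proof of Lemma \ref{lemma-E-prime-injective-strict}(i) (the construction of $P_{0}$), and in (2) you reach closedness of $\operatorname{im}_{E}\left( \bullet P_{2}\right) $ by checking that $\operatorname{coker}_{\mathbf{A}}\left( \bullet P_{2}\right) $ is torsion-free and invoking Lemma \ref{lemma-E-prime-injective-strict}(ii), whereas the paper observes directly that $\operatorname{im}_{E}\left( \bullet P_{2}\right) =\ker _{E}\left( \bullet P_{3}\right) $ is closed.
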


\begin{proof}
(1) Assume that $_{\mathbf{A}}E^{\prime }$ is injective and $\left( \ref%
{exact-seq-1}\right) $ is exact. \ Then $\left( \ref{exact-sequence-3}%
\right) $\ is exact, which implies that $\overline{\func{im}_{E}\left(
\bullet P_{1}\right) }=\ker _{E}\left( \bullet P_{2}\right) $ according to
Lemma \ref{lemma-cl(images)-kernel}(2). \ By Lemma \ref%
{lemma-E-prime-injective-strict}(i), $\func{im}_{E^{\prime }}\left(
P_{1}\bullet \right) $ is closed. \ Since $\left( E,E^{\prime }\right) $ is $%
\left( \mathcal{K}\right) $, $\func{im}_{E}\left( \bullet P_{1}\right) $ is
also closed. \ Hence $\func{im}_{E}\left( \bullet P_{1}\right) =\ker
_{E}\left( \bullet P_{2}\right) ,$ i.e., $\left( \ref{exact-sequence-2}%
\right) $ is exact. \ This proves that $E_{\mathbf{A}}$ is flat.

(2) Assume $E_{\mathbf{A}}$ is flat and the sequence $\left( \ref%
{extended-exact-seq}\right) $ is exact with $n=1$. \ Then, the sequence%
\begin{equation*}
E^{1\times k_{1}}\overset{\bullet P_{1}}{\longrightarrow }E^{1\times k_{2}}%
\overset{\bullet P_{2}}{\longrightarrow }E^{1\times k_{3}}\overset{\bullet
P_{3}}{\longrightarrow }E^{1\times k_{4}}
\end{equation*}%
is exact. \ By Lemma \ref{lemma-cl(images)-kernel}(1) we obtain%
\begin{equation*}
\overline{\func{im}_{E^{\prime }}\left( P_{2}\bullet \right) }=\ker
_{E^{\prime }}\left( P_{1}\bullet \right) .
\end{equation*}%
In addition, $\func{im}_{E}\left( \bullet P_{2}\right) =\ker _{E}\left(
\bullet P_{3}\right) $, thus $\func{im}_{E}\left( \bullet P_{2}\right) $ is
closed, and since $\left( E,E^{\prime }\right) $ is $\left( \mathcal{K}%
\right) $, $\func{im}_{E^{\prime }}\left( P_{2}\bullet \right) $ is closed.
\ This proves that $\func{im}_{E^{\prime }}\left( P_{2}\bullet \right) =\ker
_{E^{\prime }}\left( P_{1}\bullet \right) $, i.e., the sequence $\left( \ref%
{exact-sequence-3}\right) $ is exact, and $_{\mathbf{A}}E^{\prime }$ is $1$%
-injective.
\end{proof}

\section{Concluding remarks}

Consider a dual pair $\left( E,E^{\prime }\right) $ which is $\left( 
\mathcal{K}\right) $ over the $\mathbf{k}$-algebra $\mathbf{A}$. \ As shown
by Theorem \ref{th-paper}, injectivity of $_{\mathbf{A}}E^{\prime }$ implies
flatness of $E_{\mathbf{A}}$. \ The converse does not hold, since flatness
of $E_{\mathbf{A}}$ only implies $1$-injectivity of $_{\mathbf{A}}E^{\prime
} $. \ For the sequence $\left( \ref{extended-exact-seq}\right) $ to be
exact with $n=1$, $\limfunc{coker}_{\mathbf{A}}\left( \bullet P_{1}\right) $
must be torsion-free, therefore $1$-injectivity is a weak property. \ To
summarize, injectivity of $_{\mathbf{A}}E^{\prime }$ is a stronger condition
than flatness of the dual $E_{\mathbf{A}}$. \ A convenient characterization
of dual pairs $\left( E,E^{\prime }\right) $ which are $\left( \mathcal{K}%
\right) $ over the $\mathbf{k}$-algebra $\mathbf{A}$\ (besides the case when 
$E$ is a Fr\'{e}chet space or the dual of a reflexive Fr\'{e}chet space) is
an interesting, probably difficult, and still open problem.

\begin{acknowledgement}
The author would like to thank Prof. Ulrich Oberst, whose comments were very
helpful in improving the manuscript.
\end{acknowledgement}

\end{document}